%
%
%
%
%
%
%
\documentclass{svjour3}                     
\smartqed  
\usepackage{amsmath,amsfonts,amssymb,graphicx}
\usepackage{tikz}
\usepackage{multirow}
\usepackage[normalem]{ulem}

\newcommand*{\B}[1]{\mathbf{#1}}

\newcommand*{\bu}{\mathbf{u}}
\newcommand*{\bv}{\mathbf{v}}
\newcommand*{\bp}{\mathbf{p}}
\newcommand*{\bq}{\mathbf{q}}
\newcommand*{\bff}{\mathbf{f}}
\newcommand*{\bX}{\mathbf{X}}
\newcommand*{\bV}{\mathbf{V}}
\newcommand*{\bU}{\mathbf{U}}
\newcommand*{\bRT}{\mathbf{RT}}
\newcommand*{\Div}{\mbox{div}\:}
\newcommand*{\Divv}{\mbox{div}~}

\newcommand*{\secref}[1]{\S\ref{#1}}

%
%
\usepackage{latexsym}
%
%
\journalname{}
\begin{document}

\title{Explicit {\em a posteriori} and {\em a priori} error estimation for the finite element solution of Stokes equations
\thanks{The first author is supported by Japan Society for the Promotion of Science, Grant-in-Aid for Scientific Research (B) 16H03950, 20H01820 and Grant-in-Aid for Scientific Research (C) 18K03411.
The second author is supported by Grant-in-Aid for Scientific Research (C) 18K03434. The last author is supported by JST CREST Grant Number JPMJCR14D4, Japan.}
}

\titlerunning{Explicit error estimation for the finite element solution of Stokes equations}        

\author{Xuefeng LIU \and Mitsuhiro NAKAO \and Chun'guang YOU \and Shin'ichi OISHI}


\institute{Xuefeng LIU \at
            Graduate School of Science and Technology, 
            Niigata University, Niigata 950-2181, Japan\\
            \email{xfliu@math.sc.niigata-u.ac.jp}           
           \and
           Mitsuhiro NAKAO\at
           Faculty of Science and Engineering, Waseda University, Tokyo 169-8555, Japan\\
              \email{m.nakao4@kurenai.waseda.jp} 
           \and
           Chun'guang YOU\at
           He attended this research when he was a Ph.D. student at Academy of Mathematics and Systems Science, Chinese Academy of Sciences, China. \\
           \email{youchg@lsec.cc.ac.cn}
           \and 
           Shin'ichi OISHI \at
           Faculty of Science and Engineering, Waseda University, Tokyo 169-8555, Japan\\
           \email{oishi@waseda.jp}
}

\date{June 4, 2020}

\maketitle

\begin{abstract}
  For the Stokes equation over 2D and 3D domains,  explicit {\em a posteriori } and {\em a priori} error estimation are novelly developed for 
  the finite element solution. 
  The difficulty in handling the divergence-free condition of the Stokes equation is solved by 
  utilizing the extended hypercircle method along with the Scott-Vogelius finite element scheme. 
  Since all terms in the error estimation have explicit values, by further applying the interval arithmetic and verified computing algorithms, 
  the computed results provide rigorous estimation for the approximation error.
  As an application of the proposed error estimation, the eigenvalue problem of the Stokes operator is considered and 
  rigorous bounds for the eigenvalues are obtained.
  The efficiency of proposed error estimation is demonstrated by solving the Stokes equation on both convex and non-convex 3D domains.
  
\keywords{Stokes equation \and {\em a posteriori} error estimation \and  {\em a priori} error estimation \and finite element method \and hypercircle method \and eigenvalue problem}
\subclass{76M10 \and 65N30 \and 5N25}
\end{abstract}

\section{Introduction}
  
The error estimation theory for approximate solutions to the Stokes equation is one of fundamental problems in numerical analysis for fluid simulation. 
For example, in the approach of investigating the solution to the Navier--Stokes equation by verified computing, an explicit error estimation for the approximate solution to the Stokes equation is desired. 
In \cite{Watanabe_etal1999}, by providing the {\em a priori} error estimation for the Stokes equation, Watanabe--Yamamoto--Nakao developed an algorithm to verify the solution existence for a stationary Navier--Stokes equation over a 2D square domain. 
However, for general 2D domains and further 3D domains, the {\em a priori} error estimation for Stokes equation is not yet available, which remains to be the bottleneck problem for the solution verification of the Navier--Stokes equation.

The main difficulty in the {\em a priori} error estimation is due to the divergence-free condition required in the Stokes equation. 
The classical study on the numerical solutions to Stokes equation usually involves bounded but unknown constants in the error estimation terms; see, for example, the pioneer work in \cite{verfurth1989posteriori}.
For 2D domains, the Korn inequality (see, e.g., \cite{Nakao_etal1998}) has been utilized to construct the {\em a priori } error estimation for 
star-shaped 2D domain. However, for a 2D domain with general shape, and the domains in 3D space, e.g.  a cube, it is still an open problem to give explicit values for the constant in Korn's inequality.

This paper is an approach to solve the bottleneck problem in the solution verification of the Navier--Stokes equation. 
We apply the Scott-Vogelius type finite element method (FEM) \cite{scott1985norm,zhang2005new,Zhang-MC-2014} to obtain a divergence-free approximation to the Stokes equation and
then propose an explicit {\em a priori} error estimation for the Stokes equation over general 2D and 3D domains.
In our proposed error estimation, the idea of the hypercircle method has been utilized to take the advantage of the divergence-free property of the approximate solution and further construct the explicit error estimation.
The hypercircle method, also named by the Prage-Synge theorem, has been used in the error estimation for the Poisson equation (see \cite{Kikuchi+Saito2007,Liu+Oishi2013,li2018explicit}); 
the error estimation here can be regarded as a direct extension of the result of Liu--Oishi \cite{Liu+Oishi2013}.\medskip

The features of proposed method can be summarized as follows:
\begin{itemize}
\item By combining the extended hypercircle method and the Scott-Vogelius FEM scheme, one can obtain explicit error estimation for the finite element solution to the Stokes equation. 
Since all terms in the error estimation have explicit values, by further applying the verified computation technique, the computed rigorous results can be further applied in the solution verification for the Navier--Stokes equation.

\item The proposed method only utilizes the $H^1$ information of the weak solutions, which enables the application of the error estimation to Stokes equations over general 2D and 3D non-convex domains, in which cases, the solution usually contains singularity.

\end{itemize}

As an application of the explicit error estimation proposed in this paper, we 
consider the eigenvalue problem of the Stokes equation and provide explicit lower and upper bounds for the eigenvalues of the Stokes operator. 
See the detailed discussion in \secref{sec:application-to-eigenvalue} and numerical results in \secref{sub-sec:application-to-eigenvalue-pro}.

The construction of the rest of the paper is as follows. 
Lemma 1 in \secref{sec:ns-eq} introduces the extended hypercircle over a proper function space setting.
\secref{sec:fem-space} describes the FEM spaces to be used in the solution approximation and error estimation. 
In \secref{sec:error-estimation}, the {\em a posteriori} and {\em a priori} error estimation are proposed.
In \secref{sec:application-to-eigenvalue}, an application of the {\em a priori} error estimation to the eigenvalue problem of the Stokes operator is provided.
\secref{sec:numerical-results} displays the computation results for 3D domains.

\section{Function spaces and problem setting}
\label{sec:ns-eq}
%
To make the argument concise, we only consider the equation over a 3D domain, while the 2D case can be regarded as a special case and can be processed in an analogous way. 
Generally, a bounded Lipschitz domain  will be preferred in numerical analysis. 
However, to have the domain completely partitioned by tetrahedra, the domain is assumed to a polyhedron in solving practical problems.

Let $L^p(\Omega)$ ($p>0$) and $H^k(\Omega)$, $H^k_0(\Omega)$ ($k=1,2,\cdots$) be the standard Sobolev spaces over $\Omega$. The inner product in $L^2(\Omega)$ is denoted by 
$(\cdot, \cdot)_\Omega$ or $(\cdot, \cdot)$; the $L^2$ norm of a function in $L^2(\Omega)$ is denoted by $\|\cdot\|$. The space $L_0^2(\Omega)(\subset L^2(\Omega))$ has function with degenerated average over the domain, that is, $L_0^2(\Omega):=\{v\in L^2(\Omega)~|~ (v,1)_\Omega =0\}$. 

Let us introduce the divergence-free space $\bV$ by 
\begin{equation}
\label{def:V}
\bV=\{ \bv \in ( H_0^1(\Omega))^3 ~|~ \mbox{ div } \bv = 0\}\:,
\end{equation}
where $\mbox{\Div}\bv$ denotes the divergence of vector $\bv$. 
The inner product and the norm of $\bV$ are defined by
$$
(\bu,\bv)_\bV:=\int_\Omega \nabla \bu \cdot \nabla \bv ~ \mbox{d}\Omega, ~~ \|\bu\|_{\bV} :=\sqrt{(\bu,\bu)_\bV} \:.
$$
Here, $\nabla \bu$ denotes the gradient of function $\bu$.
The $H(\Div\!\!)$ space is defined by 
$$
H(\Div;\Omega) :=\{ p \in L^2(\Omega) ~|~ \Div p \in L^2(\Omega)\}\:.
$$
We further introduce the space $(H(\Div;\Omega))^3$, the member function $\bp =(p_1, p_2, p_3)$ of which has the divergence as $\Div \bp :=(\Div p_1, \Div p_2, \Div p_3) \in (L^2(\Omega))^3$.

\medskip

In this paper, we consider the Stokes equation in a weak formulation: 
Given $\bff \in (L^2(\Omega))^3$, let \(\mathbf{u} \in \mathbf{V}\) be the exact solution such that
\begin{equation}
    \label{eq:stokes}
    (\nabla \bu, \nabla \bv) = (\bff, \bv), \quad \forall \bv \in \bV.
\end{equation}
The solution existence and uniqueness of the above equation can be easily confirmed by applying the Lax-Milgram theorem. 
The saddle point formulation utilizing test function space $L_0^2(\Omega)$ is given by (see, e.g., \cite[\S5.1]{girault2012finite} and \cite{boffi2013mixed}): 

\medskip

Find $\bu \in (H_0^1(\Omega))^3$ and $\rho \in L_0^2(\Omega)$ such that
\begin{equation}
  \label{eq:stokes-saddle-problem}
  (\nabla \bu, \nabla \bv) + (\Divv \bv, \rho) + (\Divv \bu, \eta )= (\bff, \bv), \quad \forall \bv \in (H_0^1(\Omega))^3, \eta \in L_0^2(\Omega)\:.
\end{equation}
Such a formulation will be used in the FEM approximation in \secref{sec:fem-space}. 
As the objective of this paper,  we will consider an conforming FEM approximation to (\ref{eq:stokes-saddle-problem}) and provide explicit  error estimation.

\medskip

Below, let us introduce an extended version of the hypercircle method, which will help to construct an explicit error estimation for the Stokes equation.
 
\begin{lemma}[Extended Prager-Synge's theorem]
  \label{lemma:prager-synge}
	Given $\bff \in (L^2(\Omega))^3$, let $\bu$ be the solution to (\ref{eq:stokes}) corresponding to $\bff$.
	Suppose that \(\bp \in H(\Div;\Omega)^{3}\) satisfies,
\begin{equation}
  \mbox{\em div } \mathbf{p} + \nabla \phi + \mathbf{f} =0, \text{ for certain } \phi \in H^1(\Omega)\:.  
\end{equation}
%
Then for any \(\mathbf{v} \in \mathbf{V}\), the following Pythagoras equation holds,

\begin{equation}
\label{eq:hypercircle-eq}
\| \nabla \mathbf{u} - \nabla \mathbf{v} \|^2 + \|\nabla \mathbf{u} - \mathbf{p} \|^2 = \|\mathbf{p} - \nabla \mathbf{v} \|^2 \:. 
\end{equation}
\end{lemma}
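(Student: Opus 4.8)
The plan is to expand the right-hand side of \eqref{eq:hypercircle-eq} by inserting $\nabla\bu$ as an intermediate term and show that the resulting cross term vanishes. First I would write
\[
\|\bp - \nabla\bv\|^2 = \|(\bp - \nabla\bu) + (\nabla\bu - \nabla\bv)\|^2 = \|\bp - \nabla\bu\|^2 + \|\nabla\bu - \nabla\bv\|^2 + 2\,(\bp - \nabla\bu,\ \nabla\bu - \nabla\bv)\:,
\]
so the identity reduces to showing that the inner product $(\bp - \nabla\bu,\ \nabla\bu - \nabla\bv) = 0$ for every $\bv \in \bV$. Since both $\bu$ and $\bv$ lie in $\bV$, the difference $\bw := \bu - \bv$ also lies in $\bV$, i.e. $\bw \in (H_0^1(\Omega))^3$ with $\Divv\bw = 0$. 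So it suffices to prove $(\bp - \nabla\bu, \nabla\bw) = 0$ for all $\bw \in \bV$.

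Next I would treat the two contributions separately. For the term with $\nabla\bu$, the weak formulation \eqref{eq:stokes} gives directly $(\nabla\bu, \nabla\bw) = (\bff, \bw)$ since $\bw \in \bV$. For the term with $\bp$, I would use the hypothesis $\Divv\bp + \nabla\phi + \bff = 0$, component by component, together with integration by parts. Writing $\bp = (p_1,p_2,p_3)$ with each $p_i \in H(\Div;\Omega)$ and $\bw = (w_1,w_2,w_3)$, integration by parts (the Green formula for $H(\Div)$ against $H_0^1$, with no boundary term because $\bw$ vanishes on $\partial\Omega$) yields $\int_\Omega \nabla w_i \cdot p_i\, \mathrm{d}\Omega = -\int_\Omega w_i\, \Divv p_i\, \mathrm{d}\Omega$, hence $(\bp, \nabla\bw) = -(\Divv\bp, \bw)$. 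Substituting the hypothesis, $(\bp, \nabla\bw) = (\nabla\phi + \bff, \bw) = (\nabla\phi, \bw) + (\bff, \bw)$. Finally, integrating by parts once more, $(\nabla\phi, \bw) = -(\phi, \Divv\bw) = 0$ because $\Divv\bw = 0$; note this step needs only $\phi \in H^1(\Omega)$ and $\bw \in (H_0^1(\Omega))^3$, again with the boundary term dropping out since $\bw|_{\partial\Omega} = 0$. Combining, $(\bp, \nabla\bw) = (\bff, \bw) = (\nabla\bu, \nabla\bw)$, so $(\bp - \nabla\bu, \nabla\bw) = 0$, which closes the argument.

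The only delicate point is the justification of the two integration-by-parts identities at the stated regularity: $(\bp, \nabla\bw) = -(\Divv\bp,\bw)$ requires the density/trace theory for $H(\Div;\Omega)$ paired against $(H_0^1(\Omega))^3$, and $(\nabla\phi,\bw) = -(\phi,\Divv\bw)$ is the standard Green formula for $H^1 \times H_0^1$. Both are classical, so I expect no real obstacle; the main thing to be careful about is confirming that all boundary integrals genuinely vanish because the test function $\bw = \bu - \bv$ belongs to $(H_0^1(\Omega))^3$, and that the divergence-free constraint is used exactly where it is needed (to kill the pressure-like term $(\nabla\phi,\bw)$).
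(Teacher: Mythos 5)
Your proof is correct and follows essentially the same route as the paper: expand $\|\bp-\nabla\bv\|^2$ around $\nabla\bu$ and kill the cross term using the weak formulation \eqref{eq:stokes}, integration by parts in $H(\Div;\Omega)$ against $\bw=\bu-\bv\in(H_0^1(\Omega))^3$, the hypothesis $\Divv\bp+\nabla\phi+\bff=0$, and the divergence-free condition $\Divv\bw=0$. Your write-up simply makes explicit the integration-by-parts justifications that the paper's one-line chain of identities leaves implicit.
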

\begin{proof}
The holding of the equality can be confirmed by the expansion of 
$\|\bp - \nabla \bv\|^2=\|(\nabla \bu - \bp ) -  \nabla (\bu -\bv )\|^2$, which has 
the cross term as zero due to the divergence-free condition and the boundary condition of $\bu$ and $\bv$: 
$$
(\nabla \bu - \bp, \nabla (\bu -\bv ))
= (\bff + \Divv \bp , (\bu -\bv )) 
= (-\nabla \phi, (\bu -\bv ) ) = (\phi, \Divv (\bu - \bv))=0\:.
$$
\end{proof}
\begin{remark}
The selection of $\bp$ and $\bv$ in (\ref{eq:hypercircle-eq}) is not unique.
It is easy to see that, for a fixed solution $\bu$, to minimize $\| \bp - \nabla \bv\|$ is equivalent to minimize both $\|\nabla \bu -\nabla \bv\|$ and $\|\nabla \bu - \bp\|$ independently.
\end{remark}

\section{Finite element spaces}
\label{sec:fem-space}

In this section, we introduce the FEM spaces to be used in the solution approximation and error estimation. 

Let $\mathcal{T}^h$ be a regular tetrahedron subdivision for domain $\Omega$. Further requirement to the mesh for the purpose of a stable computation of the Stokes equation will be explained afterward.
On each element $K \in \mathcal{T}^h$, denote by $P^m(K)$ the set of polynomials with degree up to $m$.
We choose the Scott-Vogelius type finite element method to construct 
divergence-free FEM spaces to approximate the space $\bV$, 

\paragraph{Discontinuous space $\bX_h$ of degree $d$}
Let $X_h^{(d)}$ be the set of piecewise polynomials of degree up to $d$. 
Let $\mathbf{X}^{(d)}_h:=(X_h ^{(d)})^3$. Let $X_{h,0}^{(d)}:=L_0^2(\Omega)\cap X_h^{(d)}$.

\paragraph{Conforming FEM space $\bU_h(\subset \left(H^1(\Omega)\right)^3)$ and $\bV_h (\subset \bV)$ of degree $k$.}

\begin{itemize}
\item Let $U^{(k)}_h$ be the space consisted of piecewise polynomials of degree up to $k$, which also belong to $H^1(\Omega)$. That is, $U^{(k)}_h:=H^1(\Omega)\cap X^{(k)}(\Omega)$. Define $\mathbf{U} ^{(k)}_h:=({U}^{(k)}_h)^3$.
\item Let $U^{(k)}_{h,0}:=\{ u_h \in U^{(k)}_h ~|~ u_h = 0 \mbox{ on } \partial \Omega \}$, $\mathbf{U}^{(k)}_{h,0} := (U^{(k)}_{h,0} )^3 $.
\item Let $\mathbf{V}^{(k)}_{h}$ be the subspace of $\mathbf{U} ^{(k)}_{h,0}$ with divergence-free member function. That is,
$\mathbf{V}^{(k)}_{h} = \{\bu_h\in \mathbf{U}^{(k)}_{h,0} ~|~ \mbox{div }\bu_h=0 \}=\mathbf{U} ^{(k)}_{h} \cap {\bV}$.
\end{itemize}

\paragraph{The Raviart-Thomas FEM space $\bRT_h$ of degree $m$}
Define $RT^{(m)}_h$ by  
$$
RT^{(m)}_h:=\{ p_h \in H(\mbox{div};\Omega) \:\:|\:\: p_h|_K =
\mathbf{a}_K+b_K\mathbf{x}, \forall K \in \mathcal{T}^h \}\:.
$$
Here, $\mathbf{a}_K\in (P^m(K))^3, b_K\in P^m(K)$.
Define the tensor space $\bRT^{(m)}_h := ( RT_h^{(m)} )^3$.

\medskip

For the  FEM spaces defined here, the following properties hold.
\begin{equation}
\label{eq:fem_space_properties}  
\mbox{div }(\bRT_h^{(m)}) = \bX_h^{(m)}, \quad \nabla (U_h^{(k)}) \subset \bX_h^{(k-1)}\:.
\end{equation}
We may omit the superscript of degree in the notation for FEM spaces to have, for example, $\bX_h$, $\bV_h$, $\bRT_h$.

\paragraph{Construction of $\bV_{h}$ }

Generally, it is difficult to construct $\mathbf{V}_{h}$ directly due to the divergence-free condition. We turn to utilize the 
Scott-Vogelius type FEM space \cite{scott1985norm}, which handles the divergence-free condition implicitly by utilizing the test functions.
$$
\mathbf{V}^{(k)}_{h} = \{ \mathbf{v} \in \mathbf{U}^{(k)}_{h,0} \:| \: (\Divv \mathbf{v}, \eta _h)=0 \:\: \forall \eta_h \in X_h^{k-1} \}\:.
$$
The approximation to the Stokes equation with $\mathbf{V}_{h}$ reads: 
Find $\bu_h \in \mathbf{V}_{h}$ such that
\begin{equation}
  (\nabla \bu_h, \nabla \bv_h) = (\bff, \bv_h)\quad \forall \bv_h \in \mathbf{V}_{h}\:.
\end{equation}
The saddle point formulation is given by: Find $\bu_h \in \bU^{(k)}_{h,0}$, 
$\eta_h  \in X^{(k-1)}_{h,0}$, s.t., 
\begin{equation}
\label{eq:saddle-point-pro-fem}  
(\nabla \bu_h, \nabla \bv_h) + (\Divv \bv_h, \eta_h) + 
(\Divv \bu_h, \rho_h) = (\bff, \bv_h)~~\forall \bv_h \in \bU_{h,0}^{(k)}, \rho_h \in X_{h,0}^{(k-1)} \:.
\end{equation}

To have the inf-sup condition hold  for the above saddle point problem, 
we apply the method of S. Zhang \cite{zhang2005new} to create the tetrahedra division of domains (see detailed description in \S \ref{sec:numerical-results}) and select the degree of FEM spaces as below; 
\begin{equation}
  d =  m = k-1, k \ge 3\:.
\end{equation}

Let $P_h$ be the projection $P_h:\bV \to \bV_h$ such that, for any $\bv \in \bV$
\begin{equation}
    \label{eq:def-Ph}
    (\nabla (\bv - P_h \bv) , \nabla \bv_h) = 0, \quad \forall \bv_h \in 
\bV_h.
\end{equation}
Thus, the solution $\bu_h$ of (\ref{eq:saddle-point-pro-fem}) is just $\bu_h = P_h \bu$.


\section{Explicit error estimation for FEM solutions to the Stokes equation}
\label{sec:error-estimation}

In this section, we consider the 
{\em a posteriori} and the {\em a priori }
error estimation for the finite element method solution to the Stokes equation. 

As a preparation, let us introduce the constant $C_{0,h}$, which is used in the error estimation of the $L^2$-projection $\pi_h :L^2(\Omega)^3 \to \bX_h$: for any $\mathbf{u} \in \bV$,
\begin{equation}
\label{eq:c_0_h}
\|\bu -\pi_h \bu\| \le C_{0,h} \|\nabla \bu \|  \quad  \quad (C_{0,h}=O(h)) \:.
\end{equation}
It is easy to see that the constant $\widehat{C}_{0,h}$ in the following inequality provides an upper bound for $C_{0,h}$.
\begin{equation}
\label{eq:c_0_h_upper}
\|u -\pi_h u\| \le \widehat{C}_{0,h} \|\nabla u \|  \quad  \forall u \in H^1(\Omega) \:.
\end{equation}
Here, by using the same notation as in (\ref{eq:c_0_h}), $\pi_h$ denotes the projection $\pi_h: L^2(\Omega) \to X_h$. The explicit bounds of $\widehat{C}_{0,h}$ and $C_{0,h}$ are given in \secref{sec:numerical-results}.

\medskip

Following the idea of \cite{Kikuchi+Saito2007}, 
let us start with $\bff_h:=\pi_h \bff \in \bX_h$ 
with an auxillary boundary value problem: 
Find $\overline{\bu} \in \bV$ such that
\begin{equation}
  \label{eq:auxillary_problem}
(\nabla \overline{\bu} , \nabla \bv ) = (\bff_h, \bv) \quad \forall \bv \in \bV\:.
\end{equation}
The estimate of $\|\nabla (\bu-\overline{\bu})\|$ can be obtained by applying 
the estimation (\ref{eq:c_0_h}) of $\pi_h$  to  $(\ref{eq:stokes}) - (\ref{eq:auxillary_problem})$:
$$
(\nabla (\bu-\overline{\bu}), \nabla \bv) = (\bff - \bff_h, \bv)
= (\bff - \bff_h, (I-\pi_h)\bv)
\le C_{0,h} \|\bff - \bff_h\| \|\nabla \bv \| \:.
$$
By further taking $\bv :=\bu-\overline{\bu}$, we have
\begin{equation}
\label{eq:local-3}
\|\nabla (\bu-\overline{\bu})\| \le C_{0,h} \|\bff-\bff_h\| \:.
\end{equation}

Due to the properties in (\ref{eq:fem_space_properties}), 
for any $\phi_h \in U_h$ and $\bff_h \in X_h$, 
we can find $\bp_h \in \bRT^h$ such that the following equation holds.
\begin{equation}
  \label{eq:ph_cond}
  \Divv \mathbf{p}_h + \nabla \phi_h +  \mathbf{f}_h=0 \:.
\end{equation}
%



\medskip

\subsection{{\em A posteriori} error estimation}

Let us consider the {\em a posteriori} error estimation based on the hypercircle in (\ref{eq:hypercircle-eq}).

\begin{theorem}[A posteriori error estimation]
  \label{thm:a-posteriori-est}
For $\bff \in L^2(\Omega)^3$, let $\bu$ be the 
exact solution to the Stokes equation corresponding to $\bff$.
Let $p_h \in \bRT_h$ be an approximation to $\nabla \bu$ satisfying the condition in (\ref{eq:ph_cond}).
Then we have an  {\em a posteriori} error estimation for both $\bu_h$ and $\bp_h$
  $$
  \|\nabla (\bu-\bu_h)\|, ~ \|\nabla \bu -\bp_h\| \le \|\bp_h - \nabla \bu_h\| + C_{0,h} \|\bff - \bff_h\| \:.
  $$
\end{theorem}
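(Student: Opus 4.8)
The plan is to split the error into two pieces via the auxiliary solution $\overline{\bu}$ introduced in (\ref{eq:auxillary_problem}) and then apply the extended Prager--Synge identity (Lemma \ref{lemma:prager-synge}) to the pair $(\overline{\bu}, \bff_h)$ rather than to $(\bu, \bff)$. The key observation is that $\bp_h \in \bRT_h$ satisfies (\ref{eq:ph_cond}), i.e. $\Divv \bp_h + \nabla \phi_h + \bff_h = 0$ with $\phi_h \in U_h \subset H^1(\Omega)$, which is exactly the hypothesis of Lemma \ref{lemma:prager-synge} with right-hand side $\bff_h$. Hence, taking $\bv = \bu_h \in \bV_h \subset \bV$ in the lemma applied to $\overline{\bu}$, we get the Pythagoras equality
\begin{equation*}
\|\nabla \overline{\bu} - \nabla \bu_h\|^2 + \|\nabla \overline{\bu} - \bp_h\|^2 = \|\bp_h - \nabla \bu_h\|^2,
\end{equation*}
so that each of $\|\nabla(\overline{\bu} - \bu_h)\|$ and $\|\nabla \overline{\bu} - \bp_h\|$ is bounded above by $\|\bp_h - \nabla \bu_h\|$.

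Next I would control the discrepancy between $\bu$ and $\overline{\bu}$: estimate (\ref{eq:local-3}) already gives $\|\nabla(\bu - \overline{\bu})\| \le C_{0,h}\|\bff - \bff_h\|$. Then the triangle inequality yields
\begin{equation*}
\|\nabla(\bu - \bu_h)\| \le \|\nabla(\bu - \overline{\bu})\| + \|\nabla(\overline{\bu} - \bu_h)\| \le C_{0,h}\|\bff - \bff_h\| + \|\bp_h - \nabla \bu_h\|,
\end{equation*}
and likewise
\begin{equation*}
\|\nabla \bu - \bp_h\| \le \|\nabla(\bu - \overline{\bu})\| + \|\nabla \overline{\bu} - \bp_h\| \le C_{0,h}\|\bff - \bff_h\| + \|\bp_h - \nabla \bu_h\|,
\end{equation*}
which is precisely the claimed bound for both quantities.

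One subtlety I would need to check carefully is that $\bu_h$, the solution of the discrete saddle-point problem (\ref{eq:saddle-point-pro-fem}), indeed lies in $\bV_h \subset \bV$ and thus is an admissible choice of $\bv$ in Lemma \ref{lemma:prager-synge}; this follows from the Scott--Vogelius construction, since $\Divv \bu_h$ is tested against all of $X^{(k-1)}_{h,0}$ and, because the mean of $\Divv \bu_h$ over $\Omega$ vanishes by the boundary condition, $\Divv \bu_h \equiv 0$ pointwise given $\Divv \bu_h \in X_h^{(k-1)}$. The other point worth stating explicitly is that $\bu_h = P_h \bu$ need not equal $P_h \overline{\bu}$, so the orthogonality (\ref{eq:def-Ph}) is not directly what drives the bound on $\|\nabla(\overline{\bu}-\bu_h)\|$; instead that bound comes entirely from the hypercircle identity above, which is valid for \emph{any} $\bv \in \bV$ and in particular for $\bu_h$.

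I do not expect a genuine obstacle here — the proof is a direct combination of Lemma \ref{lemma:prager-synge} applied to the mollified data $\bff_h$ and the data-perturbation estimate (\ref{eq:local-3}). The only place demanding care is bookkeeping: making sure the hypercircle identity is invoked with the \emph{same} right-hand side $\bff_h$ that appears in the constraint (\ref{eq:ph_cond}) on $\bp_h$, and then correctly threading the perturbation $\|\nabla(\bu - \overline{\bu})\|$ through two triangle inequalities to land on the stated estimate simultaneously for $\|\nabla(\bu - \bu_h)\|$ and $\|\nabla \bu - \bp_h\|$.
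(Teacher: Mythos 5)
Your proposal is correct and follows essentially the same route as the paper: introduce the auxiliary solution $\overline{\bu}$ for the mollified data $\bff_h$, apply Lemma \ref{lemma:prager-synge} with $\bff_h$ and $\bv=\bu_h$ to bound $\|\nabla(\overline{\bu}-\bu_h)\|$ and $\|\nabla\overline{\bu}-\bp_h\|$ by $\|\bp_h-\nabla\bu_h\|$, and combine with (\ref{eq:local-3}) via the triangle inequality. Your extra remarks (that $\bu_h$ is genuinely divergence-free in the Scott--Vogelius setting and that the bound does not rely on $\bu_h=P_h\overline{\bu}$) are accurate and only make the argument more explicit than the paper's.
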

 
\begin{proof}
Let $\overline{\bu}$ be the solution of (\ref{eq:auxillary_problem}).
Replace $\bff$ with $\bff_h$ in  Lemma \ref{lemma:prager-synge}, then we have
\begin{equation}
\label{eq:local-1}  
\|\nabla (\overline{\bu}-{\bu}_h) \| \le \|\bp_h- \nabla {\bu}_h \|, \quad 
\|\nabla \overline{\bu} - \bp_h \| \le \|\bp_h- \nabla {\bu}_h \|\:.
\end{equation}
By applying the triangle inequality, we have 
\begin{equation}
  \label{eq:local-2}
\|\nabla (\bu-\bu_h)\| \le 
\|\nabla (\bu-\overline{\bu}) \| +\|\nabla (\overline{\bu}- {\bu}_h) \|\:.
\end{equation}
With the estimation in (\ref{eq:local-3}) and the first inequality of (\ref{eq:local-1}), we have,
$$
\|\nabla (\bu-\bu_h)\| \le \|\bp_h - \nabla \bu_h\| + C_{0,h} \|\bff - \bff_h\| \:.
$$
Similarly, the estimation for $\|\nabla \bu -\bp_h\|$ is obtained by noticing 
$$
\|\nabla \bu - \bp_h \| \le 
\|\nabla (\bu-\overline{\bu}) \| + \|\nabla \overline{\bu}  - \bp_h \|\:.
$$
\end{proof}

\subsection{{\em A priori} error estimation}

Let us introduce a quantity $\kappa_h$, which will play an important role in the {\em a priori} error estimation.
\begin{equation}
  \label{eq:def-kappa_h}
\kappa_h = \max_{\bff_h \in \mathbf{X}_h} ~~
\min_{\bp_h \in \bRT_h,  \mathbf{v}_h \in \bV_{h}}
\frac{\|\mathbf{p}_h - \nabla \mathbf{v}_h\|}{\|\bff_h\|}\:,
\end{equation}
where the minimization with respect to $\mathbf{p}_h$ is subject to the condition (\ref{eq:ph_cond}).

\medskip

By utilizing the quantity $\kappa_h$, we obtain the {\em a priori} error estimation for FEM solution.
\begin{theorem}
\label{eq:main-theorem-a-prirori-est}
Given $\mathbf{f} \in L^2(\Omega)^3$, let $\bu$ be the exact solution to the Stokes equation and $\bu_h = P_h \bu$. Then, we have 
\begin{equation}
\label{eq:c_h}
\| \B{u} - \B{u}_h \| \le C_h \|\nabla \B{u} - \nabla \B{u}_h \|,
\quad
\|\nabla \B{u} - \nabla \B{u}_h \| \le C_h \|\B{f}\| 
 \:.
\end{equation}
Here, 
$$
C_h :=\sqrt{C_{0,h}^2 + \kappa_h^2}\:.
$$
\end{theorem}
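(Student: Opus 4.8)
The plan is to establish the gradient estimate $\|\nabla\B{u}-\nabla\B{u}_h\|\le C_h\|\B{f}\|$ first and then deduce the $L^2$ estimate from it by duality. Write $\B{e}:=\B{u}-\B{u}_h$. Since $\B{u}_h=P_h\B{u}\in\bV_h$, the projection property (\ref{eq:def-Ph}) gives $(\nabla\B{e},\nabla\bv_h)=0$ for all $\bv_h\in\bV_h$; in particular $(\nabla\B{e},\nabla\B{u}_h)=0$, so $\|\nabla\B{e}\|^2=(\nabla\B{u},\nabla\B{e})$, and because $\B{e}\in\bV$ the weak form (\ref{eq:stokes}) turns this into $\|\nabla\B{e}\|^2=(\B{f},\B{e})$. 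The key idea is then to use the \emph{orthogonal} splitting $\B{f}=\B{f}_h+(\B{f}-\B{f}_h)$ with $\B{f}_h=\pi_h\B{f}$ and to treat the two pieces by different mechanisms.

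For the piece $(\B{f}_h,\B{e})$, I would pick any $\phi_h\in U_h$ and $\bp_h\in\bRT_h$ satisfying (\ref{eq:ph_cond}), so that $\B{f}_h=-\Divv\bp_h-\nabla\phi_h$; integrating by parts, using $\B{e}\in(H_0^1(\Omega))^3$ together with $\Divv\B{e}=0$, collapses this to $(\B{f}_h,\B{e})=(\bp_h,\nabla\B{e})$. Subtracting $(\nabla\bv_h,\nabla\B{e})=0$ for arbitrary $\bv_h\in\bV_h$ and applying Cauchy--Schwarz gives $(\B{f}_h,\B{e})\le\|\bp_h-\nabla\bv_h\|\,\|\nabla\B{e}\|$; minimizing over admissible $\bp_h$ and over $\bv_h\in\bV_h$ and invoking the definition (\ref{eq:def-kappa_h}) of $\kappa_h$ yields $(\B{f}_h,\B{e})\le\kappa_h\|\B{f}_h\|\,\|\nabla\B{e}\|$. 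For the piece $(\B{f}-\B{f}_h,\B{e})$, the defining property of the $L^2$-projection makes $(\B{f}-\B{f}_h,\pi_h\B{e})=0$, hence $(\B{f}-\B{f}_h,\B{e})=(\B{f}-\B{f}_h,\B{e}-\pi_h\B{e})\le C_{0,h}\|\B{f}-\B{f}_h\|\,\|\nabla\B{e}\|$ by (\ref{eq:c_0_h}), which applies since $\B{e}\in\bV$. Cancelling one factor of $\|\nabla\B{e}\|$ leaves $\|\nabla\B{e}\|\le\kappa_h\|\B{f}_h\|+C_{0,h}\|\B{f}-\B{f}_h\|$.

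The sharp constant then comes from Cauchy--Schwarz in $\mathbb{R}^2$: the right-hand side is at most $\sqrt{\kappa_h^2+C_{0,h}^2}\,\sqrt{\|\B{f}_h\|^2+\|\B{f}-\B{f}_h\|^2}$, and since $\pi_h$ is an orthogonal projection, $\|\B{f}_h\|^2+\|\B{f}-\B{f}_h\|^2=\|\B{f}\|^2$, giving exactly $\|\nabla\B{e}\|\le C_h\|\B{f}\|$. For the first inequality I would use an Aubin--Nitsche duality: let $\B{w}\in\bV$ solve $(\nabla\B{w},\nabla\bv)=(\B{e},\bv)$ for all $\bv\in\bV$ and set $\B{w}_h=P_h\B{w}$; testing with $\bv=\B{e}$ and subtracting $(\nabla\B{w}_h,\nabla\B{e})=0$ (again the orthogonality of $\B{e}$ to $\bV_h$) gives $\|\B{e}\|^2=(\nabla(\B{w}-\B{w}_h),\nabla\B{e})\le\|\nabla(\B{w}-\B{w}_h)\|\,\|\nabla\B{e}\|$, and the gradient estimate already proved, applied to the Stokes problem with right-hand side $\B{e}$, bounds $\|\nabla(\B{w}-\B{w}_h)\|\le C_h\|\B{e}\|$; cancelling one factor of $\|\B{e}\|$ concludes. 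The only delicate point is the very first step: one must use precisely the orthogonal decomposition of $\B{f}$ so that the $\ell^2$-combination $\sqrt{\kappa_h^2+C_{0,h}^2}$ appears rather than the crude sum $\kappa_h+C_{0,h}$; the rest is routine bookkeeping with integration by parts, Galerkin orthogonality, and Cauchy--Schwarz.
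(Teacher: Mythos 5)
Your proof is correct, but it is organized differently from the paper's. The paper works through the auxiliary continuous problem (\ref{eq:auxillary_problem}) with data $\bff_h=\pi_h\bff$: it bounds $\|\nabla(\overline{\bu}-\overline{\bu}_h)\|\le\kappa_h\|\bff_h\|$ by combining the best-approximation property of $P_h$ with the hypercircle of Lemma~\ref{lemma:prager-synge} (this is (\ref{eq:local-a-1})), bounds the data-perturbation error by (\ref{eq:local-3}), and then assembles the two pieces with the triangle inequality $\|\nabla(\bu-\bu_h)\|\le\|\nabla(\bu-\overline{\bu})\|+\|\nabla(\overline{\bu}-\overline{\bu}_h)\|$. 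You instead never introduce $\overline{\bu}$: you test the error equation with the error itself, obtaining $\|\nabla\mathbf{e}\|^2=(\bff,\mathbf{e})$, split $\bff$ orthogonally, and treat $(\bff_h,\mathbf{e})$ by integration by parts against a $\bp_h$ satisfying (\ref{eq:ph_cond}) plus Galerkin orthogonality from (\ref{eq:def-Ph}) — which is exactly the cross-term computation inside the proof of Lemma~\ref{lemma:prager-synge}, re-derived inline — and $(\bff-\bff_h,\mathbf{e})$ by the projection property and (\ref{eq:c_0_h}), which legitimately applies since $\mathbf{e}\in\bV$. Both routes land on the same intermediate bound $\kappa_h\|\bff_h\|+C_{0,h}\|\bff-\bff_h\|$ and both finish with the $\ell^2$ Cauchy--Schwarz step and the Pythagoras identity $\|\bff_h\|^2+\|\bff-\bff_h\|^2=\|\bff\|^2$; your Aubin--Nitsche argument for the $L^2$ bound spells out what the paper only cites, and is correct. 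What your version buys is a shorter, more self-contained chain (no auxiliary exact solution, no separate perturbation lemma); what the paper's version buys is that it reuses the hypercircle machinery verbatim, the same structure that drives the \emph{a posteriori} estimate and the computation of $\kappa_h$ in \secref{sec:kappa-h-computation}. The only points worth making explicit in your write-up are that a $\bp_h$ satisfying (\ref{eq:ph_cond}) exists for any $\bff_h\in\bX_h$ by (\ref{eq:fem_space_properties}) (and the case $\bff_h=0$ is trivial), so the minimization defining $\kappa_h$ in (\ref{eq:def-kappa_h}) is over a nonempty set.
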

\begin{proof}
  Take $\bff_h:=\pi_h \bff \in \bX_h$ and let $\overline{\bu}$ be the 
exact solution to the Stokes equation corresponding to $\bff_h$ and $\overline{\bu}_h = P_h \overline{\bu}$.
From the definition of projection $P_h$, the hypercircle  (\ref{eq:hypercircle-eq}) and the definition of $\kappa_h$, we have
\begin{equation}
\label{eq:local-a-1}
\|\nabla (\overline{\bu} - \overline{\bu}_h ) \| = 
\min_{\bv_h \in \bV_h} \|\nabla (\overline{\bu} - {\bv}_h ) \|
 \le \min_{\bv_h \in \bV_h} \min_{\bp_h \in \bRT_h } \|\nabla {\bv}_h  - \bp_h \|  \le \kappa_h \|\bff_h\|
\end{equation}
where the minimization w.r.t. $\bp_h$ is subject to the condition (\ref{eq:ph_cond}).\\


By applying the minimization principle to $\bu_h = P_h \bu$ and the triangle inequality,  we have
$$
\|\nabla (\bu-\bu_h)\| \le 
\|\nabla (\bu-\overline{\bu}_h) \| \le 
\|\nabla (\bu-\overline{\bu}) \| +\|\nabla (\overline{\bu}-\overline{\bu}_h) \|\:.
$$
Then, we can draw the conclusion from the estimation in (\ref{eq:local-3}) and (\ref{eq:local-a-1}).
$$
\|\nabla (\bu-\bu_h)\| 
\le  C_{0,h} \|\bff - \bff_h\|  + \kappa_h \|\bff_h\| 
\le \sqrt{C_{0,h}^2 + \kappa_h ^2} ~ \|\bff\|.
$$
The estimation of $\| \B{u} - \B{u}_h \|$ can be obtained by applying the standard the Aubin-Nitsche duality method.
\end{proof}

\subsection{Computation of $\kappa_h$}
\label{sec:kappa-h-computation}

In this subsection, we explain how to calculate the quantity $\kappa_h$.
Given $\bff_h \in \bX_h$, let $\overline{\bu}$ be the 
exact solution to the Stokes equation corresponding to $\bff_h$, i.e.,
$$
(\nabla \overline{\bu}, \nabla \bv) = (\bff_h, \bv), \quad \forall \in \bV. 
$$
Then, from the hypercircle 
$$
\|\nabla \overline{\bu} -\nabla \bv_h\|^2+
\|\nabla \overline{\bu} -\bp_h\|^2
=\|\nabla \bv_h - \bp_h\|^2\:,
$$
we know that to minimize $\|\nabla \bv_h - \bp_h\|$ is equivalent to solve the following two problems.
$$
\min_{\bv_h \in \bV_h} \|\nabla \overline{\bu} -\nabla \bv_h\|,\quad
\min_{\substack{ \bp_h \in \bRT_h \mbox{\footnotesize ~subject to (\ref{eq:ph_cond})}} } \| \nabla \overline{\bu} - \bp_h\|\:.
$$
The minimizer $\bu_h$ and $\bp_h$ can be obtained by solving the following weak problems.

\begin{itemize}
\item [1)] Find $\bu_h \in \bV_h$ s.t.
$$
(\nabla {\bu}_h, \nabla \bv_h) = (\bff_h, \bv_h), \quad \forall \bv_h \in \bV_h\:.
$$

Since it is difficult to construct $\bV_h$ explicitly, we solve the problem by using $\bU_h$ and $\bX_h$ \cite{zhang2005new}: 
Find $\bu_h \in \bU_{h,0}$, $\eta_h \in \bX_h$, $c\in R$ s.t.
\begin{equation}
  \label{eq:sub-pro-uh}
  (\nabla {\bu}_h, \nabla \bv_h)
  + ( \mbox{div }\bu_h, \rho_h) 
  + ( \mbox{div }\bv_h, \eta_h) + 
  (c, \rho_h) + (d, \eta_h)
  = (\bff_h, \bv_h)\:,
\end{equation}
for any $\bv_h \in \bU_{h,0}, \rho_h \in \bX_h, d \in R$\:.

 \item [2)] Find $\bp_h \in \bRT_h$, $\eta_h \in \bX_h$, $\phi_h \in U_h$ such that
\begin{equation}
\label{eq:sub-pro-ph}
  (\bp_h, \bq_h) + (\eta_h, \Div \bq_h + \nabla \psi_h) +
  (\Div \bp_h + \nabla \phi_h, \rho_h  ) = (-\bff_h, \rho_h)\:,
 \end{equation}
  for any $\bq_h \in \bRT_h$, $\rho_h \in \bX_h$, and $\psi_h \in U_h$.
 
\end{itemize}

By taking $\bv_h:=\bu_h$ in (\ref{eq:sub-pro-uh}) and $\bq_h:=\bp_h$ in (\ref{eq:sub-pro-ph}), we have the following equalities for the minimizer $\bu_h$ and $\bp_h$.
$$
(\nabla \bu_h, \nabla \bu_h) = (\bff_h, \bu_h), \quad
(\bp_h, \bp_h) = (-\eta_h, \Div \bp_h + \nabla \phi_h) = (\bff_h, \eta_h)\:.
$$
Thus, the error term 
$\|\nabla \bu_h - \bp_h\|$ can be presented by $\bu_h$ and $\eta_h$. 
\begin{eqnarray*}
\|\nabla \bu_h - \bp_h\|^2 &=& (\nabla \bu_h - \bp_h, \nabla \bu_h - \bp_h)  \\
&=&(\nabla \bu_h, \nabla \bu_h) - 2(\nabla \bu_h,\bp_h) + (\bp_h,\bp_h) \\
&=&(\bff_h, \bu_h) + 2( \bu_h, -\bff_h - \nabla \phi_h) + (\bff_h, \eta_h) \\
&=&(\bff_h , \eta_h  - \bu_h)\:.
\end{eqnarray*}

Let $K_1$, $K_2$ and $K_3$ be the linear operators that map
$\bff_h$ to $\bu_h$, $\bp_h$, $\eta_h$, respectively. Then, the quantity $\kappa_h$ can be calculated by solving the
following problems.
$$
	\kappa_h^2 = \max_{\bff_h \in \bX_h} \frac{ \| \nabla (K_1 \bff_h) - K_2 \bff_h\|^2}{\|\bff_h\|^2}
	= \max_{\bff_h \in \bX_h} \frac{ (\bff_h, (K_3 - K_1) \bff_h) }{\|\bff_h\|^2}\:.
$$

For detailed computation of $\kappa_h$, we can refer to 
\cite{Liu+Oishi2013}, where a similar $\kappa_h$ for the Poisson's equation is discussed and the value of 
$\kappa_h$ is solved by solving a matrix eigenvalue problem.

\begin{remark}
  An efficient computation of the approximate value of $\kappa_h$ is possible by applying iteration methods to solve the matrix eigenvalue problem.
  In the iteration process, for an approximate eigenvector $f_h$, one can solve the sub problem 1) and 2) with standard linear solvers for sparse matrices.
  However, the complexity to have a {\em guaranteed estimation} of quantity $\kappa_h$ is much higher than an approximate estimation.   Generally, to give an upper bound of the maximum eigenvalue of eigenvalue problem $Ax=\lambda Bx$, the basic idea is to 
  apply Sylvester's law of inertia to show that $\widehat{\lambda } B - A $ is positive definite for a candidate upper bound $\widehat{\lambda}$.
  In  this process, the explicit forms of $A$ and $B$ are required. 
  Thus, one has to calculate the inverse of sparse matrices to create the matrices corresponding to the linear operator $K_1$ and $K_3$, 
  which is quite time-consuming and requires huge computer memory.
\end{remark}

\section{Application to eigenvalue problem of Stokes operators}

\label{sec:application-to-eigenvalue}

In this section, we apply the {\em a priori} error estimation to the eigenvalue estimation problem for the 
Stokes operator: Find $\bu \in \bV$ and $\lambda \in R$ such that 
\begin{equation}
\label{eq:stokes_eig_pro}
(\nabla \bu, \nabla \bv) = \lambda (\bu, \bv) \quad\forall \bv \in \bV\:.
\end{equation}
Denote the eigenvalues of the above problem by $\lambda_1 \le \lambda_2 \le \cdots$.
The FEM approach for the Stokes eigenvalue problem is as follows.
 Find $\bu_h \in \bV_h$ and $\lambda_h \in R$ such that 
\begin{equation}
\label{eq:stokes_eig_pro_h}
(\nabla \bu_h, \nabla \bv_h) = \lambda_h (\bu_h, \bv_h) \quad\forall \bv_h \in \bV_h\:.
\end{equation}
Denote the approximate eigenvalues by $\lambda_{h,1} \le \lambda_{h,2} \le \cdots \le \lambda_{h,n}$ ($n=\mbox{dim}(\bV_h)$).
Since $\bV_h \subset \bV$, the min-max principle assures that the FEM approximation $\lambda_{h,k}$ gives 
upper bound for the exact eigenvalue $\lambda_k$.

Next theorem is a direct result by applying Theorem 2.1 of Liu \cite{Liu2015} to the Stokes eigenvalue problem (\ref{eq:stokes_eig_pro}).

\begin{theorem}[Application of Theorem 2.1 of \cite{Liu2015}]
  \label{thm:eig-lower-bound}
Let $P_h$ be the project defined in (\ref{eq:def-Ph}). Then, the eigenvalue $\lambda_{k}$ of (\ref{eq:stokes_eig_pro}) has a lower bound as 

\begin{equation}
\label{eq:eig_lower_bound}
\lambda_k \ge \frac{\lambda_{h,k}}{1+C_h^2 \lambda_{h,k}} (=: \underline{\lambda}_{h,k})
,\quad k=1,2,\cdots, \mbox{dim}(\bV_h).
\end{equation}

\begin{remark}
The original Theorem 2.1 in \cite{Liu2015} also works for non-conforming FEM space. In \cite{Xie2-Liu-JJIAM-2018}, the Crouzeix-Raviart 
non-conforming FEM  is utilized to bound the eigenvalue of the Stokes operator on 2D domains.
In case of 3D domains, let the approximate eigenvalue obtained by the Crouzeix-Raviart FEM space be $\lambda_{h,k}^{NC}$, then we have
\begin{equation}
\label{eq:eig_lower_bound}
\lambda_k \ge \frac{\lambda_{h,k}^{NC}}{1+(0.3804h)^2 \lambda_{h,k}^{NC}} (=: \underline{\lambda}_{h,k}^{NC})
,\quad k=1,2,\cdots, \mbox{dim}(\bV_h).
\end{equation}
Here, the quantity $0.3804$ comes from the estimation in \cite{Liu2015}. 
The lower bounds based on the {\em a priori} estimation and the one based on the non-conforming FEM space 
are compared in the section of numerical computations.
\end{remark}

\end{theorem}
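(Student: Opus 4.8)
The plan is to invoke the abstract eigenvalue lower-bound framework of Liu \cite{Liu2015} (Theorem 2.1 there) and verify that the Stokes eigenvalue problem (\ref{eq:stokes_eig_pro}) together with its conforming discretization (\ref{eq:stokes_eig_pro_h}) satisfies the structural hypotheses required by that theorem. Concretely, the abstract result applies to a variational eigenvalue problem $a(\bu,\bv)=\lambda\,b(\bu,\bv)$ on a Hilbert space with a conforming (or non-conforming) approximation space, provided one has a projection $P_h$ that is orthogonal with respect to the bilinear form $a(\cdot,\cdot)$ and a constant $C_h$ controlling the $b$-norm of the projection error by the $a$-seminorm of that error, i.e., $\|\bv-P_h\bv\|\le C_h\|\nabla(\bv-P_h\bv)\|$ for all $\bv$ in the solution space. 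The theorem then concludes $\lambda_k\ge \lambda_{h,k}/(1+C_h^2\lambda_{h,k})$.

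First I would identify the ingredients: take $a(\bu,\bv)=(\nabla\bu,\nabla\bv)$ and $b(\bu,\bv)=(\bu,\bv)$ on $\bV$, with discrete space $\bV_h\subset\bV$ (so the conforming case applies, and the min-max principle already gives $\lambda_{h,k}\ge\lambda_k$ from above, as noted in the text). Next I would observe that $P_h$ defined in (\ref{eq:def-Ph}) is precisely the $a$-orthogonal (Ritz) projection onto $\bV_h$, which is the projection Liu's theorem requires. Then I would supply the crucial approximation constant: the first inequality of Theorem~\ref{eq:main-theorem-a-prirori-est}, namely $\|\bu-\bu_h\|\le C_h\|\nabla\bu-\nabla\bu_h\|$ with $\bu_h=P_h\bu$, is exactly the hypothesis $\|\bv-P_h\bv\|_{L^2}\le C_h\|\nabla(\bv-P_h\bv)\|$ needed to feed into the abstract estimate, valid for every $\bv\in\bV$. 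With these identifications, the conclusion (\ref{eq:eig_lower_bound}) follows verbatim from Theorem 2.1 of \cite{Liu2015}.

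For the remark's non-conforming variant, the same abstract theorem is invoked but now with the Crouzeix--Raviart space, which is not a subspace of $\bV$; Liu's theorem is stated to cover this case as well, and the only change is that the projection-error constant $C_h$ is replaced by the explicit Crouzeix--Raviart interpolation constant, which on a tetrahedral mesh of mesh size $h$ is bounded by $0.3804\,h$ (this value is the one established in \cite{Liu2015}). Substituting $0.3804h$ for $C_h$ and $\lambda_{h,k}^{NC}$ for $\lambda_{h,k}$ gives the second displayed bound.

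The main obstacle is not analytical depth but bookkeeping: one must confirm that the hypotheses of the cited abstract theorem are met in exactly the form stated there — in particular that $P_h$ in (\ref{eq:def-Ph}) coincides with the Ritz projection used in \cite{Liu2015}, that the relevant norm on the right-hand side of the projection-error bound is the energy seminorm $\|\nabla\cdot\|$ rather than a full $H^1$ norm, and that $C_h=\sqrt{C_{0,h}^2+\kappa_h^2}$ from Theorem~\ref{eq:main-theorem-a-prirori-est} is genuinely the constant for which $\|\bv-P_h\bv\|\le C_h\|\nabla(\bv-P_h\bv)\|$ holds for all $\bv\in\bV$ (and not merely for the particular solution $\bu$). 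Once these correspondences are checked, the proof is a one-line application of the external theorem, so I would keep it short and simply cite \cite{Liu2015}, spelling out the identification of $a$, $b$, $P_h$, and $C_h$.
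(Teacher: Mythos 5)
Your proposal matches the paper's treatment: the paper offers no separate proof, stating only that the bound is a direct consequence of Theorem 2.1 of \cite{Liu2015} once the $a$-orthogonal projection $P_h$ of (\ref{eq:def-Ph}) and the constant $C_h$ from Theorem~\ref{eq:main-theorem-a-prirori-est} are supplied, which is exactly your identification of $a(\cdot,\cdot)=(\nabla\cdot,\nabla\cdot)$, $b(\cdot,\cdot)=(\cdot,\cdot)$, $\bV_h\subset\bV$, and $C_h=\sqrt{C_{0,h}^2+\kappa_h^2}$ (and likewise the Crouzeix--Raviart constant $0.3804h$ for the non-conforming remark). The bookkeeping point you flag is harmless: the bound $\|\bv-P_h\bv\|\le C_h\|\nabla(\bv-P_h\bv)\|$ extends from the particular source-problem solution to all $\bv\in\bV$ by the same Aubin--Nitsche duality argument, using only the energy-norm estimate $\|\nabla(\bu-P_h\bu)\|\le C_h\|\bff\|$ for $L^2$ data.
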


\section{Numerical computations}
\label{sec:numerical-results}

In this section, we solve the Stokes equation over several 3D domains, including the cube domain, the L-shape domain and the cube-minus-cube domain. 

To have a stable computing of the Stokes equation, we apply Zhang's method \cite{zhang2005new} in the mesh generation process.
First, each domain is subdivided into uniform small cubes. Then each cube is divided into 5 tetrahedra. Finally, by following Zhang's method, each tetrahedron is partitioned into $4$ sub-tetrahedra  by using the barycentric of the tetrahedron. Note that for 2D case, a mesh without degenerate point is required for a stable computation \cite{scott1985norm}.

Let $h$ be the edge of length of small cubes in the subdivision. 
From the results in \cite{JCAM-LIU-2020}, we have an upper bound for the Poincar\'e constant over the special tetrahedra 
resulted by Zhang's subdivision method. 
\begin{equation}
  \label{eq:poincare-constant-bound}
  C_{0,h} \le \widehat{C}_{0,h} \le 0.284 h ~~ (h: \mbox{the largest edge length of sub-cubes}) .
\end{equation}

\medskip

For the FEM spaces over the mesh, the degrees of FEM function spaces are selected such that $d=m=k-1(\ge 2)$. The selection of $k \ge 3$ will make sure the regularity of 
matrices in the computation. However, since many linear solvers can also deal with the cases with singular matrices, 
we also show the computation results for $d=m=1, k=2$. Notice that it is difficult to obtain guaranteed estimation for $\kappa_h$ with $k=2$.

To have rigorous bounds for the estimation of $\kappa_h$ and $C_h$, we apply the verified computation method in the computation and use the INTLAB toolbox \cite{INTLAB} for interval arithmetic. 
However, since the algorithm in verified computation of eigenvalues involves inverse computation of sparse matrices, the verified computation requires huge computer memory and the computation takes longer time than approximate computation. See the comparison of resource consuming in Table \ref{table:approximate_vs_verified_computation}. 
For the approximate estimation of $\kappa_h$, the involved matrix eigenvalue problem for a dense mesh is solved by using the SLEPc eigenvalue solver in the PETSc library \cite{slepc-toms,petsc-efficient}.

In the last subsection, we also solve the eigenvalue problem (\ref{eq:stokes_eig_pro}) on a cube domain to estimate the Poincar\'e constant over the divergence-free space $V$. 

\subsection{{\em A priori} error estimation over 3D domains}

This subsection displays the {\em a priori} error estimation results for several 3D domains. 
In the tables of the computation results, \#Elt denotes the number of elements and 
DOF denotes the largest degree of freedoms (DOF) of the FEM spaces involved in evaluating the quantity $\kappa_h$.

\paragraph{Cube domain} For the unit cube domain $\Omega=(0,1)^3$, the computation results with $k=2$ and $k=3$ are listed in Table \ref{table:cube_domain_results_k_as_2} and 
\ref{table:cube_domain_results_k_as_3}, respectively. 
In this case, we confirm the convergence rate of $C_h$ in both cases is about one.


\begin{table}[h]
\begin{center}
\caption{\label{table:cube_domain_results_k_as_2} The {\em a priori} error estimation (Cube domain, $d=m=1,k=2$) }
\begin{tabular}{c|c|c|ccc|c}
\hline
\rule[-2mm]{0cm}{0.6cm}
N & \#Elt & DOF    & $\kappa_h$ & $C_{0,h}$ & $C_h$ & Order \\
\hline
1 & 20    & 886    & 1.34E-1 & 2.84E-1 & 3.14E-1 & - \\
2 & 160   & 6774   & 1.04E-1 & 1.42E-1 & 1.76E-1 & 0.84   \\
4 & 1280  & 53050  & 5.54E-2 & 7.10E-2 & 9.01E-2 & 0.97 \\
8 & 10240 & 420018 & 2.83E-2 & 3.55E-2 & 4.54E-2 & 0.99 \\
\hline
\end{tabular}
\end{center}
\end{table}

\begin{table}[h]
\begin{center}
\caption{\label{table:cube_domain_results_k_as_3} The {\em a priori} error estimation (Cube domain, $d=m=2,k=3$) }
\begin{tabular}{c|c|ccc|c}
\hline
\rule[-2mm]{0cm}{0.6cm}
N & DOF & $\kappa_h$ & $C_{0,h}$ & $C_h$ & Order  \\
\hline
1 & 2284 & \underline{1.22E-1}  & \underline{2.84E-1} & \underline{3.09E-1} & - \\ 
2 & 17664    & \underline{7.99E-2} & \underline{1.42E-1} & \underline{1.63E-1} & 0.99  \\
3 & 139030   & 3.23E-2 & 7.10E-2 & 7.80E-2 & 1.00 \\
4 & 1103370  & 1.61E-2 & 3.55E-2 & 3.90E-2 & 1.00 \\
\hline
\end{tabular}\\
\rule[-3mm]{0cm}{0.7cm}{\footnotesize
Note: Underlined numbers are rigorous bounds by using verified computation. Approximate esitmation shows that $\kappa_h\approx 0.121$ for $N=1$, $\kappa_h\approx 0.0652$ for $N=2$.}
\end{center}
\end{table}

\begin{table}[h]
  \begin{center}
    \caption{\label{table:approximate_vs_verified_computation}Resource consuming comparison between approximate scheme and verified computation}    
  \begin{tabular}{|c|c|c|}
    \hline
    \rule[-2mm]{0cm}{0.6cm}
    & Approximate scheme & Verified computation \\
    \hline
    \rule[-2mm]{0cm}{0.6cm}
    Computer memory & 19.7MB  & 205.8MB \\
    \hline
    \rule[-2mm]{0cm}{0.6cm}
    Computing time  & 0.5 second & 54.8 seconds \\
    \hline
    \end{tabular}
\end{center}
\end{table}

\paragraph{L-shaped domain} We consider the L-shaped domain $\Omega:=((-1,1)^2 \setminus [-1,0]^2)\times (0,1)$. 
The domain is consisted of three unit cubes. 
The {\em a priori} error estimation results are displayed in Table \ref{table:L_domain_results_k_as_2} and \ref{table:L_domain_results_k_as_3}.
Here, $N$ denotes the number of sub-cubes along $z$ direction in the subdivision process.
Since the domain is non-convex, the solution of Stokes equation may have singularity around the re-entrant boundary, which will cause a dropped convergence rate.

\begin{table}[h]
\begin{center}
\caption{\label{table:L_domain_results_k_as_2} The {\em a priori} error estimation and eigenvalue error bounds (L-shaped domain, $d=m=1,k=2$) }
\begin{tabular}{c|cc|cccc}
\hline
\rule[-2mm]{0cm}{0.6cm}
N & \#Elt & DOF    & $\kappa_h$ & $C_{0,h}$ & $C_h$ & Order \\
\hline
1 & 60    & 2640    & 1.36E-1 & 2.84E-1 & 3.15E-1 & - \\
2 & 480   & 20126   & 1.27E-1 & 1.42E-1 & 1.91E-1 & 0.7\\
3 & 3840  & 158410  & 7.67E-2 & 7.10E-2  & 1.05E-1  & 0.9\\
4 & 30720 & 1257170 & 4.83E-2 & 3.55E-2 & 5.99E-2 & 0.8\\
\hline
\end{tabular}
\end{center}
\end{table}
\vskip 1cm

\begin{table}[h]
\begin{center}
\caption{\label{table:L_domain_results_k_as_3} The {\em a priori} error estimation and eigenvalue error bounds (L-shaped domain, $d=m=2,k=3$) }
\begin{tabular}{c|c|ccc|c}
\hline
\rule[-2mm]{0cm}{0.6cm}
N & DOF & $\kappa_h$ & $C_{0,h}$ & $C_h$ & Order \\
\hline
1 & 6746 & \underline{1.38E-1} & \underline{2.84E-1} & \underline{3.16E-1} & - \\ 
2 & 52604 & 8.00E-2 & 1.42E-1  & 1.63E-1 & 0.96 \\
4 & 415598 & 4.43E-2 & 7.10E-2 & 8.37E-1 & 0.96 \\
8 & 3304250 & 2.92E-2 & 3.55E-2 & 4.59E-2 & 0.87 \\ 
\hline
\end{tabular}\\
\rule[-3mm]{0cm}{0.7cm}{\footnotesize
Note: Approximate esitmation shows that $\kappa_h\approx$ 1.22E-1 for $N=1$.}
\end{center}
\end{table}

\paragraph{Cube-minus-cube domain} 
Let us consider the Stokes equation over a cube-minus-cube domain $\Omega:=((0,2)^3 \setminus [1,2]^3)$, which 
is consisted of $7$ unit cubes. 
In the mesh generation process, denote by $N$ the number of sub-cubes along $z$ direction. 
Notice that the initial domain has $N=2$ while the edge length of sub-cube is $1$. 
The computational results are listed in Table \ref{table:cube_minus_cube_domain_results_k_2} and \ref{table:cube_minus_cube_domain_results_k_3}.
To have verified bound for $\kappa_h$, it takes about 2.4 hours and about 9GB memory for the case $N=2$.
Moreover, due to the accumulation of rounding error in verified computing, 
the estimated rigorous bound of $\kappa_h$ is about $1.4$ times of the approximation obtained by the classical numerical library SLEPc \cite{slepc-toms}.

\begin{table}[h]
  \begin{center}
  \caption{\label{table:cube_minus_cube_domain_results_k_2} Quantities in the {\em a priori} error estimation (Cube-minus-cube  domain, $d=m=1,k=2$) }
  \begin{tabular}{c|cc|ccc|c}
  \hline
  \rule[-2mm]{0cm}{0.6cm}
  N & \#Elt & DOF      & $\kappa_h$ & $C_{0,h}$ & $C_h$ & Order \\
  \hline
  2 & 140   &  5962   & 2.02E-1 & 2.84E-1  & 3.49E-1 & - \\ 
  4 & 1120  &  46554  & 1.33E-1   & 1.42E-1 & 1.94E-1 & 0.85 \\
  8 & 8960  &  368050  & 8.19E-2   & 7.10E-2 &  1.08E-1  & 0.84 \\
  16 & 8960 &  2927202 & 5.30E-2   & 3.55E-2 & 6.38E-2 & 0.77 \\
  \hline
  \end{tabular}\\
  \rule[-3mm]{0cm}{0.7cm}
  \end{center}
  \end{table}
  \vskip 1cm
  
\begin{table}[h]
  \begin{center}
  \caption{\label{table:cube_minus_cube_domain_results_k_3} Quantities in the {\em a priori} error estimation (Cube-minus-cube  domain, $d=m=2,k=3$) }
  \begin{tabular}{c|c|ccc|c}
  \hline
  \rule[-2mm]{0cm}{0.6cm}
  N  & DOF      & $\kappa_h$ & $C_{0,h}$ & $C_h$ & Order \\
  \hline
  2  &  15526   & \underline{1.85E-1} & \underline{2.84E-1}  & \underline{3.39E-1} & - \\ 
  4  &  121926  & 8.52E-1   & 1.42E-1 & 1.66E-1 & 1.03 \\
  8  &  966538  & 4.64E-1   & 7.10E-2 &  8.48E-2  & 0.97 \\
  16 &  7697298 & 3.19E-2   & 3.55E-2 & 4.77E-2 & 0.83 \\
  \hline
  \end{tabular}\\
  \rule[-3mm]{0cm}{0.7cm}
  {\footnotesize
  Note: Approximate estimation shows that $\kappa_h\approx$ 1.57E-1 for $N=2$.}
  \end{center}
  \end{table}
  \vskip 1cm

\subsection{Application to the estimation of Poincar\'e constant}
\label{sub-sec:application-to-eigenvalue-pro}

We apply the {\em a priori } to estimate the Poincar\'e constant by solving the corresponding eigenvalue problem of the Stokes operator over the unit cube domain. 
The Poincar\'e constant $C_p$ over $\bV$ is the optimal quantity to make the following inequality hold. 
\begin{equation}
\|\bv \|_{L^2(\Omega)}   \le C_p \|\nabla \bv \|_{L^2(\Omega)}\quad \forall \bv \in \bV\:.
\end{equation}
Th constant $C_p$ is determined by the first eigenvalue of the Stokes operator defined in (\ref{eq:stokes_eig_pro}). That is, 
$C_p=1/\sqrt{\lambda_1}$

Recall the Poincar\'e constant $C_p'$ over $H_0^1(\Omega)$ such that,
\begin{equation}
  \label{eq:cp_dash}
  \|v \| \le C_p' \| \nabla v \| \quad \forall v \in H_0^1(\Omega)\:.  
\end{equation}
It is easy to see that $C_p \le C_p'$. 
The estimation of $C_p'$ has the optimal value as $C_p'=\sqrt{3}/(3\pi)\approx 0.183776\cdots$ for the unit cube domain. The equality in (\ref{eq:cp_dash}) holds for $v=\sin\pi x \sin\pi y \sin\pi z$.

By applying the lower bound estimation in (\ref{eq:eig_lower_bound}) along with the explicit value of projection error quantity $C_h$ in Table \ref{table:cube_domain_results_k_as_3},
we obtain the two-side bounds for both $\lambda_1$ and $C_p$.
As a comparison, the lower bounds based on the Crouzeix-Raviart nonconforming method are also provided.
The estimation results are displayed in Table \ref{table:poincare_constant_3d}. 

\begin{table}[h]
\begin{center}
\caption{\label{table:poincare_constant_3d}  
Estimation of eigenvalue and the Poincar\'e constant over the unit cube domain}
\begin{tabular}{c|ccc|c|c}
\hline
\rule[-2mm]{0cm}{0.6cm}
h &  $C_h$ & $\lambda_{h,1}$ & $\underline{\lambda}_{h,1}$ & $\underline{\lambda}^{NC}_{h,1}$ & Estimation of $C_p$ \\
\hline
\rule[-1mm]{0cm}{0.6cm}
1   & 3.14E-1 & 189.46 & 9.62 & 2.89 & 0.072 $\le C_p \le $ 0.323  \\
\rule[-1mm]{0cm}{0.6cm}
1/2 & 1.76E-1 & 65.06 & 21.63 & 9.82 & 0.123 $\le C_p \le $ 0.215\\
\rule[-1mm]{0cm}{0.6cm}
1/4 & 9.01E-2 & 62.31 & 41.37 & 27.05& 0.126 $\le C_p \le $ 0.156 \\
\rule[-1mm]{0cm}{0.6cm}
1/8 & 4.54E-2 & 62.18 & 55.11 & 46.97 & 0.127 $\le C_p \le $ 0.135\\
 \hline
\end{tabular}\\
\rule[-3mm]{0cm}{0.7cm}
{\footnotesize
Note: $C'_p \approx 0.183776$ gives a rough bound for $C_p$.}
\end{center}
\end{table}

\section*{Conclusion}
For the Stokes equation over general 3D domains, an explicit error estimation for the finite element approximation is proposed. 
The difficulty in dealing with the divergence-free condition is solved by introducing the extended hypercircle method and utilizing the 
conforming FEM space provided by the Scott-Vogelius scheme and Zhang's mesh. 
The resulted estimation plays an important role in the computer-assisted proof for the solution existence verification to the Navier--Stokes equation, which will be challenged in the next step of our research.
\vskip 0.5cm
  
\bibliographystyle{plain}
\bibliography{mylib}

\begin{thebibliography}{10}

\bibitem{petsc-efficient}
S.~Balay, W.D. Gropp, L.C. McInnes, and B.F. Smith.
\newblock Efficient management of parallelism in object oriented numerical
  software libraries.
\newblock In E.~Arge, A.~M. Bruaset, and H.~P. Langtangen, editors, {\em Modern
  Software Tools in Scientific Computing}, pages 163--202. Birkh{\"{a}}user
  Press, 1997.

\bibitem{boffi2013mixed}
Daniele Boffi, Franco Brezzi, Michel Fortin, et~al.
\newblock {\em Mixed finite element methods and applications}, volume~44.
\newblock Springer, 2013.

\bibitem{scott1985norm}
L~R~S Cott, M~V Ogelius, L~R Scott, and M~Vogelius.
\newblock {Norm estimates for a maximal right inverse of the divergence
  operator in spaces of piecewise polynomials}.
\newblock {\em ESAIM: Mathematical Modelling and Numerical Analysis},
  19(1):111--143, 1985.

\bibitem{girault2012finite}
V~Girault and P-A Raviart.
\newblock {\em {Finite element methods for Navier-Stokes equations: theory and
  algorithms}}, volume~5.
\newblock Springer Science {\&} Business Media, 2012.

\bibitem{slepc-toms}
V.~Hernandez, J.E. Roman, and V.~Vidal.
\newblock {SLEPc}: A scalable and flexible toolkit for the solution of
  eigenvalue problems.
\newblock {\em {ACM} Trans. Math. Software}, 31(3):351--362, 2005.

\bibitem{Kikuchi+Saito2007}
F~Kikuchi and H~Saito.
\newblock {Remarks on a posteriori error estimation for finite element
  solutions}.
\newblock {\em J. Comput. Appl. Math.}, 199:329--336, 2007.

\bibitem{li2018explicit}
Q~Li and X~Liu.
\newblock Explicit finite element error estimates for nonhomogeneous neumann
  problems.
\newblock {\em Appl. of Math.}, 63(3):367--379, 2018.

\bibitem{JCAM-LIU-2020}
X~Liu.
\newblock {Explicit eigenvalue bounds of differential operators defined by
  symmetric positive semi-definite bilinear forms}.
\newblock {\em J. Comput. {\&} App. Math.}, 371:112666, 2020.

\bibitem{Liu+Oishi2013}
X~Liu and S~Oishi.
\newblock {Verified eigenvalue evaluation for the Laplacian over polygonal
  domains of arbitrary shape}.
\newblock {\em SIAM J. Numer. Anal.}, 51(3):1634--1654, 2013.

\bibitem{Liu2015}
Xuefeng Liu.
\newblock {A framework of verified eigenvalue bounds for self-adjoint
  differential operators}.
\newblock {\em Appl. Math. {\&} Comput.}, 267:341--355, 2015.

\bibitem{Nakao_etal1998}
M~T Nakao, N~Yamamoto, and Y~Watanabe.
\newblock {A posteriori and constructive a priori error bounds for finite
  element solutions of the Stokes equations}.
\newblock {\em J. Comput. {\&} Appl. Math.}, 91(1):137--158, 1998.

\bibitem{INTLAB}
S.M. Rump.
\newblock Intlab - interval laboratory.
\newblock In Tibor Csendes, editor, {\em Developments in Reliable Computing},
  pages 77--104. Kluwer Academic Publishers, Dordrecht, 1999.

\bibitem{verfurth1989posteriori}
R~Verf{\"u}rth.
\newblock A posteriori error estimators for the stokes equations.
\newblock {\em Numer. Math.}, 55(3):309--325, 1989.

\bibitem{Watanabe_etal1999}
Y~Watanabe, N~Yamamoto, and M~T Nakao.
\newblock {A numerical verification method of solutions for the Navier-Stokes
  equations}.
\newblock {\em Reliab. Comput.}, 5(3):347--357, 1999.

\bibitem{Xie2-Liu-JJIAM-2018}
M~Xie, H~Xie, and X~Liu.
\newblock {Explicit lower bounds for Stokes eigenvalue problems by using
  nonconforming finite elements}.
\newblock {\em Jpn. J. Ind. Appl. Math.}, 35(1):335--354, 2018.

\bibitem{Zhang-MC-2014}
S~Zhang.
\newblock {On the divergence-free finite element method for the Stokes
  equations and the P1 Powell-Sabin divergence-free element}.
\newblock {\em Math. of Comput.}, 74(250):543--554, 2004.

\bibitem{zhang2005new}
S~Zhang.
\newblock {A new family of stable mixed finite elements for the 3D Stokes
  equations}.
\newblock {\em Math. of Comput.}, 74(250):543--554, 2005.

\end{thebibliography}

\end{document}